\newtheorem{propo}{{\bf Proposition}}[section]
\newtheorem{coro}[propo]{{\bf Corollary}}
\newtheorem{lemma}[propo]{{\bf Lemma}} 
\newtheorem{theor}[propo]{{\bf Theorem}} 
\newtheorem{ex}{{\sc Example}}[section]
\newenvironment{proof}{{\bf Proof.}}{$\Box$}
\def\N{{\mathbb N}}
\begin{document}

\vspace*{1.0in}

\begin{center} ON THE LENGTHS OF CERTAIN CHAINS OF SUBALGEBRAS IN LIE ALGEBRAS  
\end{center}
\bigskip

\begin{center} DAVID A. TOWERS 
\end{center}
\bigskip

\begin{center} Department of Mathematics and Statistics

Lancaster University

Lancaster LA1 4YF

England

d.towers@lancaster.ac.uk 
\end{center}
\bigskip

\begin{abstract} In this paper we study the lengths of certain chains of subalgebras of a Lie algebra $L$: namely, a chief series, a maximal chain of minimal length, a chain of maximal length in which each subalgebra is modular in $L$, and a chain of maximal length in which each subalgebra is a quasi-ideal of $L$. In particular we show that, over a field $F$ of characteristic zero, a Lie algebra $L$ with radical $R$ has a maximal chain of subalgebras and a chain of subalgebras all of which are modular in $L$ of the same length if and only if $L = R$, or$\sqrt{F} \not \subseteq F$ and $L/R$ is a direct sum of isomorphic three-dimensional simple Lie algebras.  
\par 
\noindent {\em Mathematics Subject Classification 2000}: 17B05, 17B20, 17B30, 17B50.
\par
\noindent {\em Key Words and Phrases}: Lie algebras, maximal chain, chief series, modular subalgebra, quasi-ideal. 
\end{abstract}

\section{Introduction}
It has been shown by a number of authors that lattice-theoretic information about the subalgebra lattice of a Lie algebra can be used to infer information about the structure of the algebra itself. These studies resemble similar ones in the theory of groups, but there are interesting and striking differences. The current paper is inspired by the lattice-theoretic characterisations of finite solvable groups given by Shareshian and Woodroofe in \cite{sw}, but again most of the results obtained, and the methods used, are different.

Throughout $L$ will denote a finite-dimensional Lie algebra over a field $F$. We write $minmax(L)$ for the minimal length of a maximal chain of subalgebras of $L$ and $\ell(L)$ for the length of a chief series for $L$. In section two we consider the relationship between $\ell(L)$ and $\ell(M)$ when $M$ is a solvable maximal subalgebra of $L$. The main result is that if $L$ is nonsolvable and $F$ has characteristic zero, or else is algebraically closed of characteristic greater than 5, then $\ell(M) \geq \ell(L)$. In the case where $F$ has characteristic zero the nonsolvable algebras $L$ for which $\ell(M) = \ell(L)$ are characterised; in particular, if $F$ is also algebraically closed there are no such algebras.  
\par

The purpose of section three is to investigate the relationship between $\ell(L)$ and $minmax(L)$. We show first that if $L$ is solvable then there is a maximal chain of subalgebras which has the same length as a chief series for $L$ and such that the dimensions of the factor spaces in the two chains are in a one-one correspondence. Moreover, if $L$ is solvable then $minmax(L) = \ell(L)$. We then consider whether, as for groups, the converse is true. This is shown to be the case if $F$ has characteristic zero or is algebraically closed of characteristic greater than five. However, the situation for Lie algebras differs from that for groups in a number of repects: for example, there are Lie algebras $L$ with $minmax(L) = \ell(L) + 1$, whereas there is no group with this property. For all algebras $L$ we have $minmax(L) \geq \ell(L)$. We show that if $L$ is nonsolvable and $F$ has characteristic zero, or else is algebraically closed of characteristic greater than 5, then $minmax(L) \geq \ell(L) + 1$. In the case where $F$ has characteristic zero the algebras $L$ for which $minmax(L) = \ell(L) + 1$ are characterised; in particular, if $F$ is also algebraically closed, again there are no such algebras.   
\par

A subalgebra $U$ of $L$ is called {\em modular} in $L$ if it is a modular element in the lattice of subalgebras of $L$; that is, if
$$ \langle U,B \rangle \cap C = \langle B, U \cap C \rangle  \hspace{.3in} \hbox{for all subalgebras}\hspace{.1in} B \subseteq C,
$$
and
$$ \langle U,B \rangle \cap C =  \langle B \cap C,U \rangle  \hspace{.3in} \hbox{for all subalgebras}\hspace{.1in} U \subseteq C,
$$
(where, $ \langle U, B \rangle$ denotes the subalgebra of $L$ generated by $U$ and $B$). A subalgebra $Q$ of $L$ is called a {\em quasi-ideal} of $L$ if $[Q,V] \subseteq Q + V$ for every subspace $V$ of $L$. We write $mod\ell(L)$ for the maximal length of a chain of subalgebras each of which is modular in $L$, and $qi\ell(L)$ for the maximal length of a chain of subalgebras each of which is a quasi-ideal of $L$. 
\par

In section three we consider the relationship between these two measures and $\ell(L)$. It is shown that, over any field, $qi\ell(L) = \ell(L), \ell(L) + 1$ or $\ell(L) + 2$, and that if the field has characteristic zero, or if $L$ is restricted and $F$ is algebraically closed of characteristic $p > 0$, then $mod\ell(L) = \ell(L)$ or $\ell(L)+1$. Furthermore, the algebras $L$ for which $qi(L) \neq \ell(L)$ or $mod\ell(L) \neq \ell(L)$ are described.  This situation again differs from the situation for groups, where $mod\ell(G) = \ell(G)$ for every finite group $G$. It is shown that, over a field $F$ of characteristic zero, Lie algebras $L$ that are solvable or whose Levi subalgebra is a direct sum of isomorphic three-dimensional simple Lie algebras with a one-dimensional maximal subalgebra, are characterised by the purely lattice-theoretic condition that $mod\ell(L) = minmax(L)$. Finally it is shown that, if $L$ be a restricted Lie algebra over an algebraically closed field $F$ of characteristic $p > 0$, then $mod\ell(L) = \ell(L) + 1$ if and only if $L$ has an ideal $B$ such that $L/B \cong sl_2(F)$ or the Witt algebra $W(1:\underline{1})$.
\par

If $A$ and $B$ are subalgebras of $L$ for which $L = A + B$ and $A \cap B = 0$ we will write $L = A \dot{+} B$; if, furthermore, $A, B$ are ideals of $L$ we write $L = A \oplus B$.  

\section{Chief series and solvable maximal subalgebras}
Let $0 = L_0 < L_1 < \ldots L_n = L$ be a chief series for $L$. Then we put $\ell(L) = n =$ the {\em length} of the chief series. For an ideal $B$ of $L$, let $il_L(B)$ denote the largest number $r$ such that there is a chain $0 = B_0 < \ldots < B_r= B$ of ideals of $L$ of length $r$. Clearly $\ell(L) = \ell(L/B) + il_L(B)$; in fact, if $C$ is an ideal of $L$ with $C \subseteq B$ we have $i_L(B) = i_{L/C}(B/C) + i_L(C)$. If $U$ is a subalgebra of $L$ we define the {\em core} (with respect to $L$) of $U$, $U_L$, to be the largest ideal of $L$ contained in $U$.

\begin{lemma}\label{l:elmax} Let $M$ be a maximal subalgebra of $L$ and let $B$ be an ideal of $L$ such that $B/M_L$ is a minimal ideal of $L/M_L$. Then
$$ \ell(M) - \ell(L) = il_M(M_L) - il_L(M_L) + il_{M/M_L}((M \cap B)/M_L) - 1.$$
\end{lemma}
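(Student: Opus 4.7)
The plan is to prove the identity by first reducing to the case $M_L=0$, where it simplifies to $\ell(M)-\ell(L)=il_M(M\cap B)-1$, and then using the fact that $B$ is a minimal ideal together with the standard isomorphism $L/B\cong M/(M\cap B)$.

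First I would observe that since $M_L$ is an ideal of $L$ contained in $M$, it is also an ideal of $M$, so any chief series of $L$ (resp.\ $M$) can be taken to pass through $M_L$. This gives the decompositions
\[
\ell(L)=il_L(M_L)+\ell(L/M_L),\qquad \ell(M)=il_M(M_L)+\ell(M/M_L).
\]
Subtracting, the desired formula is equivalent to
\[
\ell(M/M_L)-\ell(L/M_L)=il_{M/M_L}\bigl((M\cap B)/M_L\bigr)-1.
\]
Thus, after replacing $L,M,B$ by $L/M_L,M/M_L,B/M_L$, it suffices to prove the identity under the extra assumptions that $M_L=0$ and $B$ is a minimal ideal of $L$; in this reduced setting the formula becomes
\[
\ell(M)-\ell(L)=il_M(M\cap B)-1.
\]

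Next, since $M_L=0$ while $B\neq 0$, we cannot have $B\subseteq M$, so by maximality of $M$ we get $L=M+B$, whence the second isomorphism theorem yields $L/B\cong M/(M\cap B)$ and therefore $\ell(L/B)=\ell(M/(M\cap B))$. Minimality of $B$ gives $\ell(L)=1+\ell(L/B)$. On the other hand $M\cap B$ is an ideal of $M$ (it is the intersection of $M$ with an ideal of $L$), so splitting a chief series of $M$ through it gives
\[
\ell(M)=il_M(M\cap B)+\ell\bigl(M/(M\cap B)\bigr).
\]
Subtracting these two expressions and using $\ell(L/B)=\ell(M/(M\cap B))$ produces $\ell(M)-\ell(L)=il_M(M\cap B)-1$, which is exactly the reduced identity.

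There is no real obstacle here: the proof is essentially bookkeeping, and the only point that requires any care is justifying that chief series can be refined through the chosen ideals ($M_L$ in step one, $M\cap B$ in the reduced case), so that the additive decompositions of $\ell(L)$ and $\ell(M)$ used above are legitimate. This is immediate from the remark made just before the lemma that $\ell(L)=\ell(L/B)+il_L(B)$ for any ideal $B$ of $L$, applied once to $L$ and once to $M$.
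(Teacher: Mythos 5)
Your proof is correct and rests on exactly the same ingredients as the paper's: the additive decomposition $\ell(\cdot)$ through an ideal, the identity $L = M + B$ with $L/B \cong M/(M \cap B)$, and the fact that minimality of $B/M_L$ gives $\ell(L/M_L) = 1 + \ell(L/B)$. The only difference is organizational --- the paper carries out the same bookkeeping directly in a single chain of equalities, whereas you first quotient by $M_L$ (a reduction that is valid, since $M/M_L$ is maximal and core-free in $L/M_L$ and $(M/M_L)\cap(B/M_L) = (M\cap B)/M_L$) --- so this is essentially the paper's argument.
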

\begin{proof} Clearly $L = M + B$, so $L/B \cong M/(M \cap B)$. Now
$$\begin{array}{lll}
\ell(M) - \ell(L) & = & \ell(M/M_L) + il_M(M_L) - \ell(L/M_L) - il_L(M_L) \\ 
 & = & il_M(M_L) - il_L(M_L) + \ell(M/M_L) - (1 + \ell(L/B)) \\
 & = & il_M(M_L) - il_L(M_L) + [\ell(M/M_L) - \ell(M/(M \cap B))] - 1 \\
 & = & il_M(M_L) - il_L(M_L) + il_{M/M_L}((M \cap B)/M_L) - 1.
\end{array}$$
\end{proof}

\begin{coro}\label{c:elmax} Let $M$ be a maximal subalgebra of $L$ and let $B$ be an ideal of $L$ such that $B/M_L$ is a minimal ideal of $L/M_L$. Then
\begin{itemize}
\item[(i)] $\ell(M) \geq \ell(L) - 1$;
\item[(ii)] if $M \cap B \neq M_L$, then $\ell(M) \geq \ell(L)$; and
\item[(iii)] if $(M \cap B)/M_L$ is neither trivial nor a minimal ideal of $M/M_L$, then $\ell(M) \geq \ell(L) + 1$
\end{itemize}
\end{coro}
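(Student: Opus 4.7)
The corollary follows by reading off signs from the identity in Lemma~\ref{l:elmax}, and the plan is simply to establish the two nonnegativity facts that make the three inequalities drop out.

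First I would observe that $il_M(M_L) \geq il_L(M_L)$. Indeed, $M_L$ is an ideal of $L$, hence also an ideal of $M$, and any chain of $L$-ideals contained in $M_L$ is automatically a chain of subalgebras of $M$ each of which is $L$-invariant and in particular $M$-invariant. So the first bracketed term in Lemma~\ref{l:elmax} is $\geq 0$. Next, since $B/M_L$ is a minimal ideal of $L/M_L$ we have $M_L \subseteq B$, so $M_L \subseteq M \cap B$; and $M \cap B$ is an ideal of $M$ because $[M, M\cap B] \subseteq [M,B]\cap M \subseteq B \cap M$. Thus $(M \cap B)/M_L$ is a genuine ideal of $M/M_L$, and in particular $il_{M/M_L}((M\cap B)/M_L) \geq 0$. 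Combining these two nonnegativities with the formula of Lemma~\ref{l:elmax} gives (i).

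For (ii), the hypothesis $M \cap B \neq M_L$ says exactly that $(M \cap B)/M_L$ is a nonzero ideal of $M/M_L$, so the chain $0 < (M\cap B)/M_L$ witnesses $il_{M/M_L}((M\cap B)/M_L) \geq 1$, and the identity yields $\ell(M) - \ell(L) \geq 0 + 1 - 1 = 0$. For (iii), if $(M\cap B)/M_L$ is neither zero nor a minimal ideal of $M/M_L$, then by definition there is an ideal of $M/M_L$ strictly between $0$ and $(M\cap B)/M_L$, giving a chain of length $2$ and hence $il_{M/M_L}((M\cap B)/M_L) \geq 2$; substituting into the identity gives $\ell(M) \geq \ell(L)+1$.

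There is no real obstacle here; the only point requiring a moment's thought is the verification that the two ``defect'' quantities in Lemma~\ref{l:elmax} are both nonnegative, which in turn rests on the observation that ideals of $L$ lying inside $M$ are ideals of $M$, and that $M \cap B$ is an ideal of $M$ sandwiched between $M_L$ and $B$. Once those are in hand, (i)--(iii) are immediate from the arithmetic.
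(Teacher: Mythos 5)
Your proof is correct and follows essentially the same route as the paper, whose entire proof is the one-line observation that $il_L(M_L) \leq il_M(M_L)$, leaving the rest to be read off from the identity in Lemma~\ref{l:elmax} exactly as you do. Your fleshing out of the implicit steps --- that $M \cap B$ is an ideal of $M$ containing $M_L$, and that the hypotheses of (ii) and (iii) force $il_{M/M_L}((M\cap B)/M_L) \geq 1$ and $\geq 2$ respectively --- is accurate and matches the intended argument.
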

\begin{proof} Simply note that $il_L(M_L) \leq il_M(M_L)$.
\end{proof}
\bigskip

The {\em Frattini ideal} of $L$, $\phi(L)$, is the largest ideal of $L$ contained in all maximal subalgebras of $L$. The {\em abelian socle} of $L$, Asoc $L$, is the sum of the minimal abelian ideals of $L$. If $B/C$ is a chief factor of $L$ we define the {\em centraliser} in $L$ of $B/C$ to be $C_L(B/C) = \{x\in L : [x,B] \subseteq C \}$.

\begin{lemma}\label{l:max} Let $L$ be a Lie algebra with nilradical $N$, and let $M$ be a maximal subalgebra of $L$ with $N \not \subseteq M$. Then $il_M(\phi(L)) = il_L(\phi(L))$ and $\ell(M) = \ell(L) - 1$.
\end{lemma}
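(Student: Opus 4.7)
The plan is to apply Lemma \ref{l:elmax} to a carefully chosen ideal $B$, and then show that both correction terms in its formula vanish.

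First I would produce $B$ by passing to $\bar L = L/M_L$, where $\bar M = M/M_L$ is maximal with trivial core. Since $N \not\subseteq M$ forces $N \not\subseteq M_L$, the image $\bar N$ is a nonzero nilpotent ideal of $\bar L$, so the nilradical of $\bar L$ is nonzero and contains a minimal $\bar L$-ideal $\bar A$; the ideal $\bar A$ is abelian, since its derived subalgebra is a strictly smaller $\bar L$-ideal and so must vanish by minimality. Triviality of the core of $\bar M$ gives $\bar A \not\subseteq \bar M$, and combining this with abelianness of $\bar A$ and maximality of $\bar M$ forces $\bar M \cap \bar A$ to be an $\bar L$-ideal properly contained in $\bar A$, hence zero. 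Letting $B \subseteq L$ be the preimage of $\bar A$, the hypotheses of Lemma \ref{l:elmax} hold with $B/M_L$ a minimal ideal of $L/M_L$ and $M \cap B = M_L$, so the formula reduces to
$$ \ell(M) - \ell(L) = il_M(M_L) - il_L(M_L) - 1. $$

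It remains to prove $il_M(M_L) = il_L(M_L)$, and, by the same argument applied inside $\phi(L) \subseteq M_L$, the analogous $il_M(\phi(L)) = il_L(\phi(L))$. The key claim I would establish is that every $L$-chief factor $U/V$ with $V \subsetneq U \subseteq M_L$ is already $M$-irreducible; Jordan--H\"older then forces the two chain lengths to agree. For this it suffices to show that the nilradical $N$ acts trivially on each such $U/V$. Since $\text{ad}\,n$ maps $L$ into $N$ for $n \in N$ and $N$ is nilpotent, iteration gives $(\text{ad}\,n)^k(L) \subseteq N^k = 0$ for some $k$, so every element of $N$ acts nilpotently on $L$ and hence on the subquotient $U/V$; by Engel's theorem $N$ has a nonzero fixed vector in $U/V$. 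The fixed-point subspace $U^N = \{u \in U : [N,u] \subseteq V\}$ is $N$-invariant by definition and, via a short Jacobi calculation using $[M,N] \subseteq N$, also $M$-invariant, hence $L$-invariant. By $L$-irreducibility of $U/V$ we conclude $U^N = U$, i.e.\ $[N,U] \subseteq V$, so the $L$-action on $U/V$ factors through $L/N \cong M/(M \cap N)$ and the $L$-submodules of $U/V$ coincide with its $M$-submodules.

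The step I expect to demand the most care is the Jacobi verification that $U^N$ is $M$-invariant, which must be performed modulo $V$ and relies essentially on $N$ being an ideal of $L$. Once the two equalities $il_M(M_L) = il_L(M_L)$ and $il_M(\phi(L)) = il_L(\phi(L))$ are in hand, substituting into the displayed formula gives $\ell(M) = \ell(L) - 1$, completing the proof.
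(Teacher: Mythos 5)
Your proposal is correct, and while it rests on the same central mechanism as the paper's proof --- the nilradical centralises every chief factor of $L$, so chief factors of $L$ lying deep inside $M$ are already chief factors of $M$ --- the scaffolding is genuinely different. The paper pivots on the Frattini ideal rather than the core: it invokes \cite[Theorem 7.4]{frat} to produce a minimal ideal $A/\phi(L)$ of $L/\phi(L)$ with $A \subseteq N$ and $A \not\subseteq M$, so that $L = A + M$ and $A \cap M = \phi(L)$; it cites \cite[Lemma 4.3]{bg} for the centraliser fact, deduces $il_M(\phi(L)) = il_L(\phi(L))$, and then reduces to $\phi(L) = 0$, where $L = A \dot{+} M$ and $B \mapsto A + B$ carries ideals of $M$ to ideals of $L$, giving $\ell(M) = \ell(L) - 1$ directly. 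You instead work modulo $M_L$, construct the complemented minimal ideal by hand --- a minimal ideal inside the nilradical of the core-free quotient is abelian, and abelianness plus core-freeness forces $\bar{M} \cap \bar{A} = 0$, with no Frattini theory required --- and feed $M \cap B = M_L$ into Lemma \ref{l:elmax}, which the paper proves immediately beforehand but, somewhat curiously, does not use here. Your Engel argument is a correct, self-contained replacement for the citation of \cite{bg}, valid over any field, and it in fact yields a slightly stronger intermediate statement than the paper needs: every $L$-chief factor below $M_L$ (not just below $\phi(L)$) is $M$-irreducible, whence $il_M(M_L) = il_L(M_L)$ as well as $il_M(\phi(L)) = il_L(\phi(L))$, the latter since $\phi(L)$ is an ideal contained in every maximal subalgebra and hence $\phi(L) \subseteq M_L$. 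The only point worth one explicit line in your write-up is $L = M + N$ (immediate from $N \not\subseteq M$, the fact that $M + N$ is a subalgebra because $N$ is an ideal, and maximality of $M$), since the isomorphism $L/N \cong M/(M \cap N)$ and the identification of $L$-submodules with $M$-submodules of $U/V$ both rest on it. What the paper's route buys is economy via its Frattini-theoretic toolkit, which it reuses throughout; what yours buys is self-containedness and cleaner bookkeeping through the formula of Lemma \ref{l:elmax}.
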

\begin{proof} We have $N \neq \phi(L)$ since $N \not \subseteq M$. Let $A/\phi(L) \subseteq N/\phi(L)$ be a minimal ideal of $L/\phi(L)$ with $A \not \subseteq M$. Such an $A$ exists by \cite[Theorem 7.4]{frat}. Then $L = A + M$, $A \cap M = \phi(L)$ and $A \subseteq N$.
\par

Let $B/C$ be a chief factor of $L$ with $B \subseteq \phi(L)$. Then $N$ is the intersection of the centralizers of the factors in a chief series for $L$, by \cite[Lemma 4.3]{bg}, so $A \subseteq N \subseteq C_L(B/C)$. It follows that $B/C$ is a chief factor of $M$ and $il_M(\phi(L)) = il_L(\phi(L))$. In view of this, to show that $\ell(M) = \ell(L) - 1$ we can assume that $\phi(L) = 0$. But then $L = A \dot{+} M$ and if $B$ is an ideal of $M$, $A + B$ is an ideal of $L$. The result follows.
\end{proof}

\begin{lemma}\label{l:ell} Let $L$ be a Lie algebra, over a field of characteristic zero, with radical $R$, nilradical $N$, and Levi decomposition $L = R \dot{+} S$. Then 
\[ \ell(L) = il_L(\phi(L)) + il_{L/\phi(L)}(N/\phi(L)) + \dim(R/N) + \ell(S).
\]
\end{lemma}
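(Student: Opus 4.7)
The plan is to decompose $\ell(L)$ along the characteristic filtration
\[
0 \subseteq \phi(L) \subseteq N \subseteq R \subseteq L,
\]
iterating the identity $il_L(B) = il_{L/C}(B/C) + il_L(C)$ (for $C$ an ideal of $L$ contained in $B$) noted at the start of this section, together with $\ell(L) = \ell(L/R) + il_L(R)$. This telescoping will produce
\[
\ell(L) = il_L(\phi(L)) + il_{L/\phi(L)}(N/\phi(L)) + il_{L/N}(R/N) + \ell(L/R).
\]
The first two summands already appear in the required form, so it remains to identify the last two as $\dim(R/N)$ and $\ell(S)$ respectively.

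For $\ell(L/R) = \ell(S)$, I would use that the Levi decomposition $L = R \,\dot{+}\, S$ induces a Lie algebra isomorphism $L/R \cong S$; since $R$ acts trivially on $L/R$, the ideals of $L$ containing $R$ correspond bijectively to the Lie ideals of $S$, so a chief series of $L/R$ is a chief series of $S$.

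For $il_{L/N}(R/N) = \dim(R/N)$, the key input is the classical characteristic-zero fact that $[L,R]$ is a nilpotent ideal of $L$ and hence $[L,R] \subseteq N$. This forces $L$ to act trivially on the (automatically abelian) quotient $R/N$, so every vector subspace of $R/N$ is an ideal of $L/N$; refining through one-dimensional subspaces realises the upper bound $\dim(R/N)$, and the reverse inequality is automatic.

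The main obstacle --- really the only substantive one --- is justifying $[L,R] \subseteq N$. This is precisely where the characteristic-zero hypothesis enters: in positive characteristic one would only get $[R,R] \subseteq N$, leaving a possibly nontrivial $S$-module structure on $R/N$ that need not refine to one-dimensional ideals, and the third summand would have to be replaced by a less explicit quantity.
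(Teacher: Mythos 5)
Your proof is correct, but it takes a genuinely different route from the paper's. The paper reduces immediately to the case $\phi(L)=0$ and then invokes the structure theorems \cite[Theorems 7.4, 7.5]{frat}, which give the splitting $L = N \dot{+} (S \oplus C)$ with $N = $ Asoc\,$L$ and $C$ abelian; the formula is read off from that decomposition. Note that this reduction silently relies on the good behaviour of the invariants modulo the Frattini ideal (in particular $N(L/\phi(L)) = N/\phi(L)$, also from \cite{frat}). You instead telescope $il$ along the filtration $0 \subseteq \phi(L) \subseteq N \subseteq R \subseteq L$ using the additivity identity stated at the start of Section 2, and then identify the top two summands: $\ell(L/R) = \ell(S)$ via the Levi isomorphism $L/R \cong S$, and $il_{L/N}(R/N) = \dim(R/N)$ via the classical characteristic-zero inclusion $[L,R] \subseteq N$, which makes $R/N$ central in $L/N$, so that any complete flag of subspaces of $R/N$ is a chain of ideals of $L/N$. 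This buys a more elementary and self-contained argument: it avoids the Frattini-theoretic structure theorems entirely and localizes the use of characteristic zero to Levi's theorem plus $[L,R] \subseteq N$. Two small points. First, you should record explicitly that $\phi(L) \subseteq N$ (because $\phi(L)$ is a nilpotent ideal, again from \cite{frat}), since this is what legitimizes your filtration and the application of the additivity identity at that step. Second, your closing remark about positive characteristic is too optimistic: there even $[R,R] \subseteq N$ can fail, since Lie's theorem does --- for every prime $p$ there are solvable Lie algebras whose derived algebra is not nilpotent --- though this does not affect your characteristic-zero argument.
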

\begin{proof} It suffices to show that if $\phi(L) = 0$ then 
\[ \ell(L) = il_{L}(N) + \dim(R/N) + \ell(S).
\]
This follows from the fact if $\phi(L) = 0$ then $L = N \dot{+} (S \oplus C)$, where $N =$ Asoc $L$ and $C$ is an abelian subalgebra of $L$, by \cite[Theorems 7.4, 7.5]{frat}.
\end{proof} 
\bigskip

We will call a simple Lie algebra $L$ over a field $F$ of characteristic different from two {\em special} if it  has a one-dimensional maximal subalgebra. It is well-known that $L$ is special if and only if it is three-dimensional and $\sqrt{F} \not \subseteq F$.

\begin{lemma}\label{l:simple} Let $S = S_1 \oplus \ldots \oplus S_n$ be a semisimple Lie algebra over a field of characteristic zero, where $S_i$ is a simple ideal of $S$ for $1 \leq i \leq n$, and $n>1$. Let $M$ be a maximal subalgebra of $S$ with $S_n \not \subseteq M$. 
\begin{itemize}
\item[(i)] If $S_j \not \subseteq M$ for some $1 \leq j \leq n-1$ then $S_n \cong S_j$ and $\ell(M) = \ell(S) -1$.
\item[(ii)] If $S_j \subseteq M$ for all $1 \leq j \leq n-1$ then $\ell(M) \geq \ell(S)$ with equality if and only if either
\begin{itemize}
\item[(a)] $S_n$ is three-dimensional simple and $\sqrt{F} \not \subseteq F$, or
\item[(b)] $M \cap S_n$ is simple.
\end{itemize}
\end{itemize}
\end{lemma}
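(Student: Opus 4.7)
The plan is to analyze the structure of $M$ explicitly in each case as an internal direct sum involving subalgebras of the $S_i$, then read $\ell(M)$ off a chief series of $M$.

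Part (ii) is the easier case. The hypothesis puts the ideal $T := S_1 \oplus \cdots \oplus S_{n-1}$ of $S$ inside $M$; writing any $m \in M$ as $t + s_n$ with $t \in T$, $s_n \in S_n$ shows $s_n = m - t \in M$, whence $M = T \oplus (M \cap S_n)$ with $M \cap S_n \cong M/T$ maximal in $S/T \cong S_n$. Thus $\ell(M) = (n-1) + \ell(M \cap S_n)$. Since $S_n$ is simple and hence of dimension at least three, $M \cap S_n$ must be nonzero (otherwise $M = T$ would fail to be maximal in $S$), so $\ell(M) \geq n = \ell(S)$. Equality is equivalent to $M \cap S_n$ having no proper nontrivial ideals; such a Lie algebra is either one-dimensional -- in which case $S_n$ has a one-dimensional maximal subalgebra and hence, by the remark preceding the lemma, is three-dimensional simple with $\sqrt{F} \not\subseteq F$, giving case (a) -- or simple, giving case (b).

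For part (i), the aim is to show $M \cong \bigoplus_{k \ne n} S_k$ as Lie algebras. Maximality of $M$ gives $M + S_j = M + S_n = S$. I first want to show each projection $\pi_k : S \to S_k$ restricts to a surjection $M \to S_k$: applying $\pi_k$ to $M + S_n = S$ gives $\pi_k(M) = S_k$ for $k \ne n$, and applying it to $M + S_j = S$ gives $\pi_n(M) = S_n$. The heart of the argument is then to verify that $M \cap S_k$ is an ideal of $S_k$, not merely of $M$: for $s \in M \cap S_k$ and $t \in S_k$, pick $m \in M$ with $\pi_k(m) = t$; the components of $m$ outside $S_k$ commute with $s$, so $[s,t] = [s,m] \in M \cap S_k$. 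Combined with simplicity of $S_k$ and $S_k \not\subseteq M$, this forces $M \cap S_k = 0$ for $k = j$ and $k = n$.

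With $M \cap S_n = 0$, the composite $M \hookrightarrow S \twoheadrightarrow S/S_n \cong \bigoplus_{k \ne n} S_k$ is injective, and it is surjective because $M + S_n = S$, so it is an isomorphism. Hence $M \cong \bigoplus_{k \ne n} S_k$, a semisimple algebra with $n - 1$ simple components, giving $\ell(M) = n - 1 = \ell(S) - 1$. Running the identical argument with $j$ in place of $n$ yields $M \cong \bigoplus_{k \ne j} S_k$; by uniqueness of the decomposition of a semisimple Lie algebra into simple ideals, the multisets $\{S_k : k \ne n\}$ and $\{S_k : k \ne j\}$ must agree up to isomorphism, forcing $S_n \cong S_j$. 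The main obstacle I anticipate is the ``ideal of $S_k$'' step: it simultaneously requires both hypotheses $S_j \not\subseteq M$ and $S_n \not\subseteq M$ in order to guarantee surjectivity of every projection, and once this is secured everything else is bookkeeping.
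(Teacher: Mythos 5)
Your proposal is correct and takes essentially the same route as the paper: in (i) the paper likewise reduces to $M \cap S_n = M \cap S_j = 0$ and $M \cong S/S_n \cong S/S_j$, hence $S_n \cong S_j$ by uniqueness of the simple decomposition (the paper gets $M \cap S_n$ to be an ideal of all of $S$ directly, using $S = M + S_j$ and $[M \cap S_n, S_j] = 0$, where you show it is an ideal of $S_k$ via surjective projections --- a cosmetic difference). In (ii) both arguments come down to $M = S_1 \oplus \cdots \oplus S_{n-1} \oplus (M \cap S_n)$, so $\ell(M) = (n-1) + \ell(M \cap S_n)$, with equality $\ell(M) = \ell(S)$ exactly when $M \cap S_n$ has no proper nontrivial ideals, i.e.\ is one-dimensional (whence $S_n$ is special, case (a)) or simple (case (b)).
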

\begin{proof} (i) We have $S = M + S_n = M + S_j$ and $M \cap S_n = M \cap S_j = 0$, since each of these is an ideal of $S$. It follows that $M \cong L/S_n \cong L/S_j$, whence $S_n \cong S_j$ and $\ell(M) = n-1 = \ell(S) - 1$.
\medskip

\noindent (ii) In this case $M = S_1 \oplus \ldots \oplus S_{n-1} \oplus M \cap S_n$, whence $\ell(M) \geq n = \ell(S)$. Moreover, $\ell(M) = \ell(S)$ if and only if $\dim M \cap S_n = 1$, in which case $S_n$ is special (case (a)) or $M \cap S_n$ is simple (case (b)).
\end{proof}

\begin{propo}\label{p:nonsolv} Let $L$ be a non-solvable Lie algebra over field $F$ and let $M$ be a solvable maximal subalgebra of $L$.
\begin{itemize}
\item[(i)] If $F$ has characteristic zero, or else is algebraically closed of characteristic greater than 5,  then $\ell(M) \geq \ell(L)$.
\item[(ii)] If $F$ has characteristic zero then $\ell(M) = \ell(L)$ if only if $\sqrt{F} \not \subseteq F$, $L/R \cong S$ is three-dimensional simple, where $R$ is the radical of $L$, and each chief factor $B/C$ with $\phi(L) \subseteq C \subset B \subseteq N$ is an irreducible $M \cap S$-module.
\end{itemize}
\end{propo}
\begin{proof} (i) Let $L$ be a minimal counter-example. If $M_L \neq 0$ then $L/M_L$ is non-solvable and $\ell(M/M_L) \geq \ell(L/M_L)$ by the minimality of $L$. But now $\ell(M) - il_M(M_L) \geq \ell(L) - il_L(M_L)$, whence $\ell(M) \geq  \ell(L) + il_M(M_L) - il_L(M_L) \geq \ell(L)$, a contradiction. Thus $M_L = 0$.
\par

Let $B$ be a minimal ideal of $L$. Then $B \not \subseteq M$, so $L = M + B$. We must have that $M \cap B$ is trivial or is a minimal ideal of $M$, since otherwise Corollary \ref{c:elmax} (iii) is contradicted. Suppose first that it is a minimal ideal of $M$. Then
$\ell(M) = \ell(M/(M \cap B)) + 1 = \ell(L/B) + 1 = \ell(L)$, a contradiction. We therefore have that $M \cap B = 0$. But then $M$ is a c-ideal of $L$, and so $L$ is solvable, by \cite[Theorems 3.2 and 3.3]{cid}, a contradiction.
\medskip

\noindent (ii) We show first that if $L/R$ does not have a one-dimensional maximal subalgebra then $\ell(M) > \ell(L)$. Let $L$ be a minimal counter-example. Suppose $M_L \neq 0$. Clearly $M_L = R$ so $L/M_L$ is non-solvable and does not have a one-dimensional maximal subalgebra. We thus have $\ell(M/M_L) > \ell(L/M_L)$ by the minimality of $L$. But then, as above, $\ell(M) > \ell(L)$, a contradiction, so $M_L = 0$ and $L$ is semisimple. If $B$ is a simple ideal of $L$, then $L = M + B$ and $L/B \cong M/M \cap B$ is solvable, so $L$ is simple. But then $\ell (M) = \ell(L) = 1$ and so $M$ is one-dimensional, a contradiction. 
\par

It follows that, if $\ell(M) = \ell(L)$, then $\sqrt{F} \not \subseteq F$, $L/R$ is three-dimensional simple and $M = R + Fs$ for some $s \in S$. But now $$\ell(M) = il_M(\phi(L)) + il_{M/\phi(L)}(N/\phi(L)) + \dim(R/N) + 1,$$ by Lemma \ref{l:ell}. It follows from Lemma \ref{l:ell} that $\ell(L) = \ell(M)$ if and only if $il_{M/\phi(L)}(N/\phi(L)) = il_{L/\phi(L)}(N/\phi(L))$. This occurs precisely when each chief factor $B/C$ with $\phi(L) \subseteq C \subset B \subseteq N$ is an irreducible $Fs$-module.
\end{proof}
\bigskip

Note that, in particular, the chief factors referred to in part (ii) of the above result must have dimension at most two, as the following example illustrates.

\begin{ex}\label{e:one} Let $L = A \rtimes S$ be the semidirect product of an abelian ideal, $A$, and a three-dimensional non-split simple Lie algebra, $S$, over the real field. Let $S$ act irreducibly on $A$ and let $\dim A \geq 3$. Then $\ell(L) = 2$. However, for any $s \in S$, $A$ has an ad\,$s$-invariant subspace of dimension at most two (see, for example, \cite[page23]{gein}). It follows that the maximal subalgebra $M = A \rtimes Fs$ has $\ell(M) \geq 3$.  
\end{ex}

\begin{coro}\label{c:nonsolv} Let $L$ be a nonsolvable Lie algebra over a field $F$ which is algebraically closed of characteristic zero, and let $M$ be a maximal solvable subalgebra of $L$. Then $\ell(M) \geq \ell(L) + 1$.
\end {coro}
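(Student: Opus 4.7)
The plan is to deduce this corollary directly from Proposition \ref{p:nonsolv}. By part (i) of that proposition, we already know $\ell(M) \geq \ell(L)$, since $F$ has characteristic zero. So the task reduces to ruling out the equality case $\ell(M) = \ell(L)$.

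To do this, I would invoke part (ii) of Proposition \ref{p:nonsolv}, which lists the necessary conditions for equality: in particular, any equality forces $\sqrt{F} \not\subseteq F$, together with $L/R$ being three-dimensional simple of non-split type. Since $F$ is algebraically closed, every element of $F$ has a square root in $F$, i.e.\ $\sqrt{F} \subseteq F$. This directly contradicts the necessary condition, so equality cannot occur and we must have the strict inequality $\ell(M) \geq \ell(L) + 1$.

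The only conceivable obstacle would be if the proof of Proposition \ref{p:nonsolv}(ii) implicitly used that $F$ is not algebraically closed somewhere, so that its characterisation could not be applied in the algebraically closed setting. But inspection shows that part (ii) is stated for any field of characteristic zero, and the condition $\sqrt{F}\not\subseteq F$ is extracted as a consequence within the proof (via the existence of a special simple quotient and a one-dimensional maximal subalgebra of it). So the corollary is essentially an immediate specialization, and no further work beyond citing Proposition \ref{p:nonsolv} is required.
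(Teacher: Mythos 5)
Your proposal is correct and is exactly the argument the paper intends: the corollary is stated without proof immediately after Proposition \ref{p:nonsolv} precisely because, as you observe, part (i) gives $\ell(M) \geq \ell(L)$ and part (ii) makes equality require $\sqrt{F} \not\subseteq F$, which fails when $F$ is algebraically closed. No further work is needed, so your proof matches the paper's (implicit) one.
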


\section{Chains of subalgebras of maximal length} 
The following two results are analogues of \cite[Theorems 1,2]{koh}.

\begin{theor}\label{t:exists} Let $L$ be a solvable Lie algebra with chief series $L = L_r > L_{r-1} > \ldots > L_0 = 0$. Then there is a maximal chain of subalgebras $L = M_r > M_{r-1} > \ldots > M_0 = 0$ of $L$ and a permutation $\pi \in S_r$ such that dim\,$(M_i/M_{i-1}) =$ dim\,$(L_{\pi(i)}/L_{\pi(i)-1})$ for $1 \leq i \leq r$.
\end{theor}
\begin{proof} We use induction on dim\,$L$. Let $A/\phi(L)$ be a minimal ideal of $L/\phi(L)$. Then there is a maximal subalgebra $M$ of $L$ with $L = A + M$ and $A \cap M = \phi(L)$. Let $B/C$ be a chief factor of $L$ with $B \subseteq \phi(L)$. Then, as in Lemma \ref{l:max}, $B/C$ is a chief factor of $M$. Form a chief series $L = L_r > \ldots > L_k = A > L_{k-1} = \phi(L) > \dots L_0 = 0$ for $L$. Since $M/\phi(L) \cong L/A$ and the chief factors of $L$ inside $\phi(L)$ are chief factors of $M$, the chief factors of $M$ are isomorphic to $L_j/L_{j-1}$ for $j \in \{0, \ldots, k-1\} \cup \{k+1, \ldots, r\}$. It follows by induction that $M$ has a maximal chain of subalgebras such that there is a bijection from the set of codimensions of successive elements in this chain to the set of dimensions of these chief factors. But dim\,$L/M =$ dim\,$A/\phi(L)$, which completes the proof. 
\end{proof}

\begin{theor}\label{t:maxmin} Let $L$ be a solvable Lie algebra with chief series $L = L_s > L_{s-1} > \ldots L_0 = 0$, and let $L = M_r > M_{r-1} > \ldots > M_0 = 0$ be a maximal chain of subalgebras of $L$ of minimum length. Then $r = s$ and there is a permutation $\pi \in S_r$ such that dim\,$(M_i/M_{i-1}) =$ dim\,$(L_{\pi(i)}/L_{\pi(i)-1})$ for $1 \leq i \leq r$.
\end{theor}
\begin{proof} We have $\ell(L) = s \geq r$, by Theorem \ref{t:exists}. We use induction on dim\,$L$. Then there is a bijection from the set of codimensions of successive elements in the chain $M_{r-1} > \ldots > M_0 = 0$ to the set of dimensions of the chief factors of $M_{r-1}$. Hence $\ell(M_{r-1}) = r-1$. Let $B$ be the core of $M_{r-1}$, and $A/B$ be a chief factor of $L$. Then $L = M_{r-1} + A$ and $B = M_{r-1} \cap A$.
\par

Form a chief series $L = L_s > \ldots > L_k = A > L_{k-1} = B > \dots L_0 = 0$ for $L$. Then $L/A \cong M_{r-1}/B$, so the chief factors of $M_{r-1}$ containing $B$ are isomorphic to $L_j/L_{j-1}$ for $k+1 \leq j \leq s$. Also any chief factor of $L$ inside $B$ is also a chief factor of $M_{r-1}$, since otherwise $\ell(M_{r-1}) \geq \ell(L) \geq r$, a contradiction. Finally dim\,$L/M_{r-1} =$ dim\,$A/B$, which completes the proof.
\end{proof}
\bigskip

Put $minmax(L) =$ the minimum length of a maximal chain of subalgebras of $L$. 

\begin{lemma}\label{l:minmaxell} For every Lie algebra $L$ we have $minmax(L) \geq \ell(L)$.
\end{lemma}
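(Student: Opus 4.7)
The plan is to prove this by induction on $\dim L$, using the already-established fact (Corollary \ref{c:elmax}(i)) that a maximal subalgebra drops $\ell$ by at most one.

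The base case $\dim L = 0$ is trivial, with both sides equal to $0$. For the inductive step, let $L = M_r > M_{r-1} > \ldots > M_0 = 0$ be a maximal chain of subalgebras of $L$ of minimum length, so $r = minmax(L)$. The key observation is that the truncated chain $M_{r-1} > M_{r-2} > \ldots > M_0 = 0$ is automatically a maximal chain of subalgebras in $M_{r-1}$ (since each $M_{i-1}$ is already maximal in $M_i$ inside $L$, it remains maximal when viewed inside $M_{r-1}$). Hence $minmax(M_{r-1}) \leq r - 1$, and since $\dim M_{r-1} < \dim L$, the induction hypothesis gives
\[
\ell(M_{r-1}) \leq minmax(M_{r-1}) \leq r - 1.
\]

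Now $M_{r-1}$ is a maximal subalgebra of $L$, so Corollary \ref{c:elmax}(i) yields $\ell(M_{r-1}) \geq \ell(L) - 1$. Combining the two inequalities gives $\ell(L) - 1 \leq r - 1$, that is, $\ell(L) \leq r = minmax(L)$, which closes the induction.

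There is no real obstacle here: the whole argument hinges on the fact that truncating a minimum-length maximal chain at the top produces a maximal chain in the penultimate subalgebra, which lets the induction engage with Corollary \ref{c:elmax}(i). No further structural information about $L$ (solvability, characteristic of $F$, etc.) is needed, which is consistent with the statement being claimed for arbitrary Lie algebras.
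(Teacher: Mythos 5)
Your proof is correct and is essentially the paper's own argument: the paper phrases it as a minimal counterexample rather than explicit induction on $\dim L$, but the substance is identical — truncate a minimum-length maximal chain to get $minmax(M_{r-1}) \leq r-1$, apply the induction hypothesis, and conclude via Corollary \ref{c:elmax}(i). No gaps.
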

\begin{proof} Let $L$ be a minimal counter-example, and let $$0 = M_0 < \ldots < M_r = L$$ be a maximal chain of subalgebras of $L$. Clearly $r \geq 1$. Then
$$\begin{array}{llllll} 
r & \geq & 1 + minmax(M_{r-1}) & \geq & 1 + \ell(M_{r-1}) & \hbox{ by the minimality of } L\\
 & & & \geq & \ell(L) & \hbox{ by Corollary \ref{c:elmax} (i)}.
 \end{array}$$
\end{proof}

\begin{propo}\label{p:char0} Let $L$ be a Lie algebra, over a field of characteristic zero, with radical $R$ and Levi decomposition $L = R \dot{+} S$. Then $minmax(L) \leq il_L(R) + minmax(S)$.
\end{propo}
\begin{proof} We use induction on $\dim R$. The result is clear if $R = 0$, so suppose that $R \neq 0$. Clearly $\phi(L) \neq R$ by Levi's Theorem, so let $A/\phi(L)$ be a minimal ideal of $L/\phi(L)$ with $A \subseteq R$. Then there is a maximal subalgebra $M$ of $L$ such that $L = A + M$, $S \subseteq M$ and $A \cap M = \phi(L)$. Let $R(M)$ be the radical of $M$. Then $A + R(M)$ is a solvable ideal of $L$, and so $R(M) \subseteq R \cap M \subseteq R(M)$. By induction we have that 
\begin{align}
minmax(M) & \leq il_M(R \cap M) + minmax(S) \nonumber \\
   & = il_M(\phi(L)) + il_{M/\phi(L)}((R \cap M)/\phi(L)) + minmax(S) \nonumber \\
   & = il_L(\phi(L)) + il_{L/\phi(L)}(R) - 1 + minmax(S) \nonumber \\
   & = il_L(R) - 1 + minmax(S), \nonumber
\end{align}
using Lemma \ref{l:max}. But now $$minmax(L) \leq minmax(M) + 1 \leq il_L(R) + minmax(S).$$
\end{proof}

\begin{theor}\label{t:nonsolv} Let $L$ be a nonsolvable Lie algebra over a field $F$ which is either of characteristic zero or else is algebraically closed of characteristic greater than 5. Then $minmax(L) \geq \ell(L) + 1$.
\end {theor}
\begin{proof} Let $L$ be a minimal counter-example, and let $0 = M_0 < \ldots < M_r = L$ be a maximal chain of subalgebras of $L$ of minimal length $r$. Clearly $r \geq 1$. If $M_{r-1}$ is not solvable we get
$$\begin{array}{llllll} 
r - 1 & \geq & minmax(M_{r-1}) & \geq & 1 + \ell(M_{r-1}) & \hbox{ by the minimality of } L\\
 & & & \geq & \ell(L) & \hbox{ by Corollary \ref{c:elmax} (i)},
 \end{array}$$
a contradiction.
\par

If $M_{r-1}$ is solvable then
$$\begin{array}{llll} 
r - 1 & \geq & minmax(M_{r-1}) = \ell(M_{r-1}) & \hbox{ by Theorem \ref{t:maxmin} }\\
  & \geq & \ell(L) & \hbox{ by Proposition \ref{p:nonsolv} },
 \end{array}$$
a contradiction.
\end{proof} 
\bigskip

Putting together Theorems \ref{t:maxmin} and \ref{t:nonsolv} we have the following corollary.

\begin{coro}\label{c:solvmaxmin} Let $L$ be a Lie algebra over a field over field $F$ which is either of characteristic zero or else is algebraically closed of characteristic greater than 5. Then $minmax(L) = \ell(L)$ if and only if $L$ is solvable.
\end{coro}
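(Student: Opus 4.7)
The plan is to split into the two implications, each of which is a direct application of a result already established in the section.

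For the forward direction, I would argue by contrapositive. Suppose $L$ is not solvable. Then Theorem \ref{t:nonsolv} applies under the hypothesis on $F$ and gives $minmax(L) \geq \ell(L) + 1$. In particular $minmax(L) \neq \ell(L)$, so the equality $minmax(L) = \ell(L)$ forces $L$ to be solvable.

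For the reverse direction, assume $L$ is solvable. Let $0 = M_0 < M_1 < \ldots < M_r = L$ be a maximal chain of subalgebras of $L$ of minimum length, so that $r = minmax(L)$, and let $0 = L_0 < L_1 < \ldots < L_s = L$ be a chief series of $L$, so $s = \ell(L)$. Theorem \ref{t:maxmin} tells us that $r = s$ and that there is a permutation $\pi \in S_r$ with $\dim(M_i/M_{i-1}) = \dim(L_{\pi(i)}/L_{\pi(i)-1})$; in particular, the lengths coincide, giving $minmax(L) = \ell(L)$.

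Since both directions are immediate from results already in hand, there is no real obstacle; the corollary is purely a packaging of Theorems \ref{t:maxmin} and \ref{t:nonsolv}. The only thing to be careful about is that Theorem \ref{t:maxmin} is stated for solvable $L$ without any restriction on $F$, whereas Theorem \ref{t:nonsolv} needs the hypothesis on $F$, which is exactly what is assumed in the statement of the corollary; so the two halves fit together without further work.
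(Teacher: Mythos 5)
Your proposal is correct and is exactly the argument the paper intends: the corollary is stated there with the remark ``Putting together Theorems \ref{t:maxmin} and \ref{t:nonsolv}'', which is precisely your two-step proof (Theorem \ref{t:maxmin} giving $minmax(L)=\ell(L)$ for solvable $L$, and Theorem \ref{t:nonsolv} giving $minmax(L)\geq \ell(L)+1$ for nonsolvable $L$ under the stated field hypothesis). Your closing observation about which hypothesis on $F$ is needed where is also accurate and matches the paper's setup.
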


\begin{coro}\label{c:nonsolvb} Let $L$ be a nonsolvable Lie algebra over a field $F$ which is algebraically closed of characteristic zero. Then $minmax(L) \geq \ell(L) + 2$.
\end {coro}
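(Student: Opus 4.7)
The plan is to mimic the proof of Theorem \ref{t:nonsolv} and strengthen it by one unit, exploiting the fact that over an algebraically closed field $F$ of characteristic zero we have $\sqrt{F}\subseteq F$, so the exceptional case in Proposition \ref{p:nonsolv}(ii) can never occur. I would argue by minimal counterexample: suppose $L$ is a nonsolvable algebra of smallest dimension for which $minmax(L)<\ell(L)+2$, and choose a maximal chain $0=M_0<\cdots<M_r=L$ of minimum length $r=minmax(L)$.

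First I would dispose of the case in which $M_{r-1}$ is nonsolvable. Here $M_{r-1}$ is a proper subalgebra, so the minimality hypothesis on $L$ gives $minmax(M_{r-1})\geq \ell(M_{r-1})+2$; combined with $r-1\geq minmax(M_{r-1})$ and Corollary \ref{c:elmax}(i), which yields $\ell(M_{r-1})\geq \ell(L)-1$, this forces $r\geq \ell(L)+2$, contradicting our assumption.

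In the case where $M_{r-1}$ is solvable, Theorem \ref{t:maxmin} applied to $M_{r-1}$ (with the chain $0=M_0<\cdots<M_{r-1}$, which must still be a maximal chain of $M_{r-1}$ of minimum length, else we could shorten the whole chain) gives $r-1=\ell(M_{r-1})$. Now I would invoke Proposition \ref{p:nonsolv}(ii): since $L$ is nonsolvable and $M_{r-1}$ is a solvable maximal subalgebra, either $\ell(M_{r-1})>\ell(L)$, or equality holds and $L/R$ is a special three-dimensional simple Lie algebra. But over the algebraically closed field $F$ we have $\sqrt{F}\subseteq F$, so no simple Lie algebra over $F$ is special, ruling out the equality case. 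Hence $\ell(M_{r-1})\geq \ell(L)+1$, giving $r\geq \ell(L)+2$, again a contradiction.

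There is no real obstacle here; the argument is essentially a bookkeeping refinement of Theorem \ref{t:nonsolv}. The only point to check is that the chain $0=M_0<\cdots<M_{r-1}$ is of minimum length in $M_{r-1}$ (so that Theorem \ref{t:maxmin} applies cleanly); this is immediate because any shorter maximal chain of $M_{r-1}$ could be extended by $L$ to give a shorter maximal chain of $L$, contradicting the minimality of $r$.
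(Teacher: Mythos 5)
Your proof is correct and takes essentially the same route as the paper's: a minimal counterexample with a minimum-length maximal chain $0 = M_0 < \ldots < M_r = L$, a case split on whether $M_{r-1}$ is solvable, the inductive hypothesis plus Corollary \ref{c:elmax}(i) in the nonsolvable case, and Theorem \ref{t:maxmin} plus the $\ell(M_{r-1}) \geq \ell(L)+1$ bound in the solvable case. The only cosmetic differences are that you re-derive Corollary \ref{c:nonsolv} inline from Proposition \ref{p:nonsolv}(ii) (via $\sqrt{F} \subseteq F$ for algebraically closed $F$) where the paper simply cites it, and your verification that the truncated chain has minimum length in $M_{r-1}$ is unnecessary, since the argument only needs the trivial inequality $minmax(M_{r-1}) \leq r-1$.
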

\begin{proof} We proceed as in the proof of Theorem \ref{t:nonsolv}. Let $L$ be a minimal counter-example, and let $0 = M_0 < \ldots < M_r = L$ be a maximal chain of subalgebras of $L$ of minimal length $r$. Clearly $r \geq 1$. If $M_{r-1}$ is not solvable we get
$$\begin{array}{llllll} 
r - 1 & \geq & minmax(M_{r-1}) & \geq & 2 + \ell(M_{r-1}) & \hbox{ by the minimality of } L\\
 & & & \geq & 1 + \ell(L) & \hbox{ by Corollary \ref{c:elmax} (i)},
 \end{array}$$
a contradiction.
\par

If $M_{r-1}$ is solvable then
$$\begin{array}{llll} 
r - 1 & \geq & minmax(M_{r-1}) = \ell(M_{r-1}) & \hbox{ by Theorem \ref{t:maxmin} }\\
  & \geq & \ell(L) + 1 & \hbox{ by Corollary \ref{c:nonsolv} },
 \end{array}$$
a contradiction.
\end{proof} 
\bigskip

The above result mirrors more closely what happens in group theory. It is noted in \cite{sw} that there is no group $G$ with $minmax(G) = \ell(G) + 1$. However, if $L$ is three-dimensional non-split simple, then $\ell(L) = 1$ and $minmax(L) = 2$. In fact, when $F$ has characteristic zero, we can classify the algebras $L$ for which $minmax(L) = \ell(L) + 1$. 

\begin{propo}\label{p:split} Let $L$ be a Lie algebra over a field $F$ of characteristic zero with radical $R$. Then $minmax(L) = \ell(L) + 1$ if and only if $\sqrt{F} \not \subseteq F$ and $L/R$ is a direct sum of isomorphic three-dimensional simple Lie algebras.
\end{propo}
\begin{proof} We establish the `only if' first. By Theorem \ref{t:nonsolv} and Corollary \ref{c:solvmaxmin} it suffices to show that if $L/R$ is not a direct sum of isomorphic three-dimensional special simple Lie algebras then $minmax(L) \geq \ell(L) + 2$. Let $L$ be a minimal counter-example, and let
\begin{align}
0 = M_0 < M_1 < \ldots < M_n = L \nonumber
\end{align}
be a maximal chain of minimal length. If $R \not \subseteq M_{n-1}$ then $L = R + M_{n-1}$ and $L/R \cong M_{n-1}/(R \cap M_{n-1})$, so $M_{n-1}/R(M_{n-1})$ is not a direct sum of isomorphic three-dimensional special simple Lie algebras (where $R(M_{n-1})$ is the radical of $M_{n-1}$). It follows that
\begin{align}
n-1 \geq minmax(M_{n-1}) \geq \ell(M_{n-1}) + 2 \geq \ell(L) + 1,
\end{align}
by the inductive hypothesis and Corollary \ref{c:elmax}, whence $n \geq \ell(L) + 2$, as claimed.
\par

So suppose that $R \subseteq M_{n-1}$. Let $L = R \dot{+} S$ be the Levi decomposition of $L$, let $S = S_1 \oplus \ldots \oplus S_r$ where $S_i$ is a simple ideal of $L$ for $1 \leq i \leq r$, and suppose that $S_r \not \subseteq M_{n-1}$. If $M_{n-1}$ is solvable, then
\begin{align}
n-1 \geq minmax(M_{n-1}) \geq \ell(L) + 1, \nonumber
\end{align}
by Proposition \ref{p:nonsolv}, and the result follows. If $M_{n-1}/R(M_{n-1})$ is non-zero but not a direct sum of isomorphic three-dimensional special simple Lie algebras, then $(1)$ holds again. So suppose that $M_{n-1}/R(M_{n-1})$ is a direct sum of isomorphic three-dimensional special simple Lie algebras. Then $S_j \subseteq M_{n-1}$ for $1 \leq j \leq r$, since otherwise $S$ is a direct sum of isomorphic three-dimensional special simple Lie algebras, by Lemma \ref{l:simple}. We therefore have
$$\begin{array}{llll}
n-1 & \geq minmax(M_{n-1}) \geq \ell(M_{n-1}) + 1 & \hbox{by Theorem \ref{t:nonsolv}} \\ 
   & = il_{M_{n-1}}(R) + \ell(M_{n-1} \cap S) + 1 &  \\
    & \geq il_L(R) + \ell(M_{n-1} \cap S) + 1  &  \\
    & \geq il_L(R) + \ell(S) + 1 & \hbox{by Lemma \ref{l:simple}} \\
    & =  \ell(L) + 1, &
\end{array}$$
and the result follows.
\par

Suppose now that $L/R$ is a direct sum of isomorphic three-dimensional special simple Lie algebras. As a result of Theorem \ref{t:nonsolv} it suffices to show that $minmax(L) \leq \ell(L) + 1$. Moreover, by Proposition \ref{p:char0}, it suffices to show that $minmax(L/R) \leq \ell(L/R) + 1$. Let $L/R \cong S_1 \oplus \ldots \oplus S_k$, where the $S_i$ are isomorphic copies of the same three-dimensional special simple Lie algebra. Define $\Delta_i$ inductively by $\Delta_1 = \{s + \bar{s}: s \in S_{k-1}, \bar{s} = \theta(s),$ where $\theta$ is an isomorphism from $S_{k-1}$ to $S_k \}$ (the diagonal subalgebra of $S_{k-1} \oplus S_k$), $\Delta_i =$ the diagonal subalgebra of $S_{k-i} \oplus \Delta_{i-1}$ for $2 \leq i \leq k-1$. Then 
$$0 < Fs < \Delta_{k-1} < S_1 \oplus \Delta_{k-2} \ldots < S_1 \oplus \ldots \oplus S_{k-2} \oplus \Delta_1 < L$$ is a maximal chain of subalgebras of $L$, so $minmax(L) \leq k+1$. But then $minmax(L) = \ell(L) + 1$, by Theorem \ref{t:nonsolv}. 
\end{proof}

\begin{ex}\label{e:two} Notice that $minmax(L) - \ell(L)$ can take any value $n \in \N$. For, let $L = S_1 \oplus \ldots \oplus S_n$, where the $S_i$ are mutually non-isomorphic three-dimensional non-split simple Lie algebras for $1 \leq i \leq n$. Over the rational field there are infinitely many such $S_i$. Let $M$ be a maximal subalgebra of $L$. Then Lemma \ref{l:simple} implies that $M = S_1 \oplus \ldots \oplus \hat{S_i} \oplus \ldots \oplus S_n \oplus Fs_i$, for some $1 \leq i \leq n$, where $s_i \in S_i$ and $\hat{S_i}$ indicates a term that is missing from the direct sum. Similarly it is easy to see that any maximal subalgebra of $M$ is isomorphic to $S_1 \oplus \ldots \oplus \hat{S_i} \oplus \ldots \oplus S_n$ or to $$S_1 \oplus \ldots \oplus \hat{S_i} \oplus \ldots \oplus \hat{S_j} \oplus \ldots \oplus S_n \oplus Fs_i \oplus Fs_j.$$ It follows that $\ell(L) = n$ and $minmax(L) = 2n$.   
\end{ex}

\section{Chains of modular subalgebras and of quasi-ideals of maximal length}
We shall need the following characterisation of modular subalgebras as given by Amayo and Schwarz in \cite{as}.

\begin{theor}\label{t:mod} (\cite[page 311]{as})A modular subalgebra $M$ of a finite-dimensional Lie algebra $L$ over
any field of characteristic zero is either
\begin{itemize}
\item[(i)] an ideal of $L$; or
\item[(ii)] $L/M_L$ is almost abelian, every subalgebra of $L/M_L$ is a quasi-ideal, $M/M_L$ is
one-dimensional and is spanned by an element which acts as the identity map
on $([L, L]+M_L)/M_L$; and $L/([L, L] + M_L)$ is one-dimensional; or
\item[(iii)] $M/M_L$ is two-dimensional and $L/M_L$ is the three-dimensional split simple
Lie algebra; or
\item[(iv)] $M/M_L$ is a one-dimensional maximal subalgebra of $L/M_L$ and $L/M_L$ is
a three-dimensional non-split simple Lie algebra.
\end{itemize}
\end{theor}

\begin{theor}\label{t:modl} Let $L$ be a Lie algebra with radical $R$ over a field of characteristic zero. Then 
\begin{itemize}
\item[(i)] $\ell(L) \leq mod\ell(L) \leq \ell(L) + 1$; and
\item[(ii)] $mod\ell(L) = \ell(L) + 1$ if and only if $L/R$ has a three-dimensional simple ideal.
\end{itemize}
\end{theor}
\begin{proof} The fact that $\ell(L) \leq mod\ell(L)$ follows from the fact that ideals of $L$ are modular in $L$. Let
\begin{align}
0 = M_0 < M_1 < \ldots < M_s = L 
\end{align}
be a chain of modular subalgebras of $L$ of maximal length, and suppose that $M_i$ is the first ideal of $L$ that we encounter in going down the chain from $M_{s-1}$. Then $M_{i-k}$ is an ideal of $L$ for each $0 \leq k \leq i$, by \cite[Lemmas 1.1, 1.7]{tow}. If $i = s-1$ then $(2)$ is a chief series for $L$ and $\ell(L) = s = mod\ell(L)$. So suppose that $i < s-1$. Then $M_i = (M_{i+1})_L$ and one of cases (ii), (iii) and (iv) of Theorem \ref{t:mod} holds. If $L/M_i$ is almost abelian, then $\ell(L) = i+2 = s$, and so $mod\ell(L) = \ell(L)$. If $L/M_i$ is three-dimensional simple, then $\ell(L) = i+1 = s-1$ and so $mod\ell(L) = \ell(L) + 1$. This proves both (i) and (ii).  
\end{proof}
\bigskip

A straightforward corollary is the following result which gives a purely lattice-theoretic characterisation of certain algebras. 

\begin{coro}\label{c:modl} Let $L$ be a Lie algebra with radical $R$ over a field of characteristic zero. Then $mod\ell(L) = minmax(L)$ if and only if either
\begin{itemize}
\item[(i)] $L=R$, or 
\item[(ii)] $\sqrt{F} \not \subseteq F$ and $L/R$ is a direct sum of isomorphic three-dimensional simple Lie algebras.
\end{itemize}
\end{coro}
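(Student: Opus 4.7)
My plan is to observe that this corollary is essentially a direct combination of Theorem \ref{t:modl}, Proposition \ref{p:split}, Lemma \ref{l:minmaxell} and Corollary \ref{c:solvmaxmin}, with no substantial new input required.

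For the ``if'' direction I would argue as follows. If $L = R$ then $L$ is solvable, so $minmax(L) = \ell(L)$ by Corollary \ref{c:solvmaxmin}; moreover $L/R = 0$ has no three-dimensional simple ideal, so $mod\ell(L) = \ell(L)$ by Theorem \ref{t:modl}(ii). Thus $mod\ell(L) = minmax(L)$. If instead $\sqrt{F} \not\subseteq F$ and $L/R$ is a direct sum of isomorphic three-dimensional simple Lie algebras, each such summand is a three-dimensional simple ideal of $L/R$, so $mod\ell(L) = \ell(L) + 1$ by Theorem \ref{t:modl}(ii), while $minmax(L) = \ell(L) + 1$ by Proposition \ref{p:split}; again the two coincide.

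For the ``only if'' direction, suppose $mod\ell(L) = minmax(L)$. By Theorem \ref{t:modl}(i) we have $mod\ell(L) \leq \ell(L) + 1$, and by Lemma \ref{l:minmaxell} we have $minmax(L) \geq \ell(L)$. Hence $minmax(L) \in \{\ell(L),\, \ell(L)+1\}$. In the first case Corollary \ref{c:solvmaxmin} forces $L$ to be solvable, i.e.\ $L = R$, giving (i). In the second case Proposition \ref{p:split} forces $\sqrt{F} \not\subseteq F$ and $L/R$ to be a direct sum of isomorphic three-dimensional simple Lie algebras, giving (ii).

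There is no real obstacle: the work has already been done in the preceding results, and the only thing to check is that the upper bound $mod\ell(L) \leq \ell(L) + 1$ from Theorem \ref{t:modl} meshes exactly with the classification of algebras with $minmax(L) = \ell(L)$ or $\ell(L) + 1$. The one small point worth flagging, to keep the argument transparent, is the remark that in case (ii) each simple summand of $L/R$ is itself a three-dimensional simple ideal, so the hypothesis of Theorem \ref{t:modl}(ii) is indeed satisfied.
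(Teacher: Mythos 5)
Your proof is correct and takes essentially the same approach as the paper: both directions amount to combining Theorem \ref{t:modl}, Proposition \ref{p:split} and the solvable case (Theorem \ref{t:maxmin}, or equivalently Corollary \ref{c:solvmaxmin}). Your case split on $minmax(L) \in \{\ell(L), \ell(L)+1\}$ is merely a repackaging of the paper's split into solvable versus nonsolvable via Theorem \ref{t:nonsolv}, so no genuinely different ideas are involved.
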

\begin{proof} Suppose first that $mod\ell(L) = minmax(L)$ but $L$ is nonsolvable. Then
$$ \ell(L) + 1 \leq minmax(L) = mod\ell(L) \leq \ell(L) + 1,$$
by Theorem \ref{t:nonsolv} and Theorem \ref{t:modl} (i). It follows that $minmax(L) = \ell(L) + 1$ and so $\sqrt{F} \not \subseteq F$ and $L/R$ is a direct sum of isomorphic three-dimensional simple Lie algebras, by Proposition \ref{p:split}.
\par

The converse follows from Theorems \ref{t:maxmin}, \ref{t:modl} and Proposition \ref{p:split}.
\end{proof}
\bigskip

We recall the definition of the algebras $L_m(\Gamma)$ over a field $F$ of characteristic zero or $p$, where $p$ is prime, as given by Amayo in \cite[page 46]{am2}. Let $m$ be a positive integer satisfying 
$$m = 1, \hspace{1cm} \hbox{or if } p \hbox{ is odd, } \hspace{.2cm} m = p^r - 2 \hspace{.2cm} (r \geq 1),$$ 
$$\hbox{or if } p =2, \hspace{.2cm} m = 2^r - 2 \hbox{ or } m = 2^r - 3 \hspace{.2cm} (r \geq 2).$$ 
Let $\Gamma = \{\gamma_0, \gamma_1, \ldots\} \subseteq F$ subject to 
$$(m + 1 - i) \gamma_i = \gamma_{m+i-1} = 0 \hspace{.3cm} \hbox{for all } i \geq 1, \hbox{ and}$$ $$\lambda_{i,k+1-i} \gamma_{k+1} = 0 \hspace{.3cm} \hbox{for all } i,k \hbox{ with } 1 \leq i \leq k.$$
Let $L_m(\Gamma)$ be the Lie algebra over $F$ with basis $v_{-1}, v_0, v_1, \ldots, v_m$ and products 
$$[v_{-1},v_i] = -[v_i,v_{-1}] = v_{i-1} + \gamma_i v_m, \hspace{1cm} [v_{-1}, v_{-1}] = 0,$$ 
$$[v_i,v_j] = \lambda_{ij} v_{i+j} \hbox{ for all } i,j \hbox{ with } 0 \leq i,j \leq m,$$
where $v_{m+1} = \ldots = v_{2m} = 0$.    
\par
We shall need the following classifications of quasi-ideals and of Lie algebras with core-free subalgebras of codimension one as given in \cite{am1} and \cite{am2}.

\begin{theor}\label{t:qi} (\cite[Theorem 3.6]{am1}) Let $Q$ be a core-free quasi-ideal of the Lie algebra $L$ over a field $F$. Then one of the following possibilities occurs.
\begin{itemize}
\item[(i)] $Q$ has codimension one in $L$;
\item[(ii)] $L \cong L_1(0)$ (defined below) and $F$ has characteristic two; or
\item[(iii)] $L$ is almost abelian and $Q$ is one-dimensional.
\end{itemize}
\end{theor}

\begin{theor}\label{t:am} (\cite[Theorem 3.1]{am2})
Let $L$ have a core-free subalgebra of codimension one. Then either (i) dim $L \leq 2$, or else (ii) $L \cong L_m(\Gamma)$ for some $m$ and $\Gamma$ satisfying the above conditions.
\end{theor}

We shall also need the following properties of $L_m(\Gamma)$ which are given by Amayo in \cite{am2}.

\begin{theor}\label{t:gamma} (\cite[Theorem 3.2]{am2})
\begin{itemize}
\item[(i)] If $m > 1$ and $m$ is odd, then $L_m(\Gamma)$ is simple and has only one subalgebra of codimension one.
\item[(ii)] If $m > 1$ and $m$ is even, then $L_m(\Gamma)$ has a unique proper ideal of codimension one, which is simple, and precisely one other subalgebra of codimension one.
\item[(iii)] $L_1(\Gamma)$ has a basis $\{u_{-1}, u_0, u_1 \}$ with multiplication $[u_{-1}, u_0] = u_{-1} + \gamma_0 u_1$ $(\gamma_0 \in F, \gamma_0 = 0$ if $\Gamma = \{0\})$, $[u_{-1}, u_1] = u_0, [u_0, u_1] = u_1$.
\item[(iv)] If $F$ has characteristic different from two then $L_1(\Gamma) \cong L_1(0) \cong sl_2(F)$.
\item[(v)] If $F$ has characteristic two then $L_1(\Gamma) \cong L_1(0)$ if and only if $\gamma_0$ is a square in $F$. 
\end{itemize}
\end{theor}

\begin{theor}\label{t:qil} Let $L$ be a Lie algebra over a field $F$. Then 
\begin{itemize}
\item[(i)] $\ell(L) \leq qi\ell(L) \leq \ell(L) + 2$;
\item[(ii)] $qi\ell(L) = \ell(L) + 2$ if and only if $F$ has characteristic two and $L$ has an ideal $B$ such that $L/B \cong L_1(0)$; and
\item[(iii)] $qi\ell(L) = \ell(L) + 1$ if and only if $L$ has an ideal $B$ such that $L/B \cong L_m(\Gamma)$ where $m$ is odd, and $\gamma_0$ is not a square in $F$ if $m=1$.
\end{itemize}
\end{theor}
\begin{proof} We proceed as in Theorem \ref{t:modl}. The fact that $\ell(L) \leq qi\ell(L)$ follows from the fact that ideals of $L$ are quasi-ideals of $L$. Let
\begin{align}
0 = M_0 < M_1 < \ldots < M_s = L 
\end{align}
be a chain of quasi-ideals of $L$ of maximal length, and suppose that $M_i$ is the first ideal of $L$ that we encounter in going down the chain from $M_{s-1}$. Then $M_{i-k}$ is an ideal of $L$ for each $0 \leq k \leq i$, by \cite[Lemmas 1.1, 1.7]{tow} and the fact that quasi-ideals of $L$ are modular in $L$. If $i = s-1$ then $(3)$ is a chief series for $L$ and $\ell(L) = s = qi\ell(L)$. So suppose that $i < s-1$. Then $M_i = (M_{i+1})_L$ and one of the cases of Theorems \ref{t:am} and \ref{t:qi} holds. If $L/M_i$ is almost abelian, then $\ell(L) = i+2 = s$, and so $qi\ell(L) = \ell(L)$. If not, then $L/M_i \cong L_m(\Gamma)$. There are the following possibilities:
\begin{itemize}
\item if $m>1$ and $m$ odd, then $\ell(L) = i+1 = s-1$, so $qi\ell(L) = \ell(L)+1$;
\item if $m>1$ and $m$ even, then $\ell(L) = i+2 = s$, so $qi\ell(L) = \ell(L)$;
\item if $m=1$ and $\gamma_0$ is not a square if $F$ has characteristic $2$, then $\ell(L) = i+1 = s-1$, so $qi\ell(L) = \ell(L)+1$; and finally
\item if $m=1$, $F$ has characteristic $2$ and $L_1(\Gamma) \cong L_1(0)$, then $\ell(L) = i+1 = s-2$, so $qi\ell(L) = \ell(L)+2$. 
\end{itemize}
This establishes all three cases.
\end{proof}

\begin{coro}\label{c:restricted} Let $L$ be a restricted Lie algebra over an algebraically closed field $F$ of characteristic $p > 0$. Then
\begin{itemize}
\item[(i)] $mod\ell(L) \leq \ell(L) + 1$; and
\item[(ii)] $mod\ell(L) = \ell(L) + 1$ if and only if $L$ has an ideal $B$ such that $L/B \cong sl_2(F)$ or the Witt algebra $W(1:\underline{1})$.
\end{itemize}
\end{coro}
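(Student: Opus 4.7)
The plan is to mirror the proof of Theorem \ref{t:modl} exactly, replacing the Amayo--Schwarz classification (Theorem \ref{t:mod}) by its restricted counterpart over an algebraically closed field of characteristic $p>0$. That classification (due to Varea, in the same spirit as \cite{as}) asserts that if $M$ is a core-free modular subalgebra of a restricted Lie algebra $L$ over $F$, then either $L$ is almost abelian with $M$ of codimension one, or $L\cong sl_2(F)$ with $M$ two-dimensional, or $L\cong W(1:\underline{1})$ with $M$ the standard maximal subalgebra of codimension one. With this available, everything in the proof of Theorem \ref{t:modl} transfers, giving both (i) and the ``only if'' part of (ii).

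Concretely, I would take a chain $0 = M_0 < M_1 < \ldots < M_s = L$ of modular subalgebras of maximal length and let $M_i$ be the first member, reading downwards from $M_{s-1}$, that is an ideal of $L$. By \cite[Lemmas 1.1, 1.7]{tow} each $M_j$ with $j \le i$ is then an ideal. If $i=s-1$ the chain is already a chief series and $mod\ell(L)=\ell(L)$. Otherwise $M_i=(M_{i+1})_L$ and $M_{i+1}/M_i$ is a core-free modular subalgebra of the restricted quotient $L/M_i$, so one of the three cases of the restricted classification applies: if $L/M_i$ is almost abelian we get $s-i=2=\ell(L/M_i)$ and $mod\ell(L)=\ell(L)$; if $L/M_i\cong sl_2(F)$ or $W(1:\underline{1})$ we get $s-i=2$ while $\ell(L/M_i)=1$, and therefore $mod\ell(L)=\ell(L)+1$ with $B:=M_i$ the required ideal. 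This is (i), and also the forward direction of (ii).

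For the converse of (ii), given an ideal $B$ of $L$ with $L/B\cong sl_2(F)$ or $W(1:\underline{1})$, I would take a chief series of $L$ from $0$ up to $B$ (every member is an ideal, hence modular in $L$) and adjoin between $B$ and $L$ a preimage of the modular codimension-one subalgebra supplied by the classification in the simple quotient. Because modularity passes through ideal quotients in the obvious way, this produces a modular chain in $L$ of length $\ell(B)+2=\ell(L)+1$, and combined with (i) yields equality.

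The main obstacle is ensuring that the restricted analogue of Theorem \ref{t:mod} is indeed available in the form stated, and in particular that no longer modular chain can be squeezed into $L/M_i$ when it equals $sl_2(F)$ or $W(1:\underline{1})$; this is automatic in characteristic zero, but in the restricted setting one must rule out additional core-free modular subalgebras sitting strictly below the Borel of $sl_2(F)$ or strictly below the standard maximal subalgebra of $W(1:\underline{1})$, and also handle the small-characteristic coincidences (e.g.\ $p=2,3$) where these algebras degenerate or become isomorphic. These details, rather than the dimension arithmetic above, are the real content of the proof.
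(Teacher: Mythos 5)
Your skeleton (the chain-counting argument of Theorem \ref{t:modl}, plus an explicit construction for the converse) is sound, and your converse is essentially right --- indeed the inserted preimage has codimension one in $L$, hence is automatically a quasi-ideal and therefore modular, so no lifting lemma is even needed. But there is a genuine gap, and you have located it yourself: the ``restricted counterpart of Theorem \ref{t:mod}'' on which everything rests is not available in the form you state, and you explicitly defer its verification (``the real content of the proof''). Varea's theorem (\cite[Theorem 2.2]{mod}, extended to $p\le 7$ by \cite[Theorem 2.2]{sm}) is not a classification of core-free modular subalgebras; it is the single statement that every modular subalgebra of a restricted Lie algebra over an algebraically closed field of characteristic $p>0$ is a quasi-ideal. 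That reduction is the entire point of the paper's proof: since quasi-ideals are always modular, it gives $mod\ell(L) = qi\ell(L)$, and the corollary then follows at once from the already-proved Theorem \ref{t:qil}, with no re-run of the chain argument at all. The remaining work --- ruling out case (ii) of Theorem \ref{t:qil} (the characteristic-two quotient $L_1(0)$, which your conjectured classification silently omits) and showing that in case (iii) only $sl_2(F)$ and the Witt algebra $W(1:\underline{1})$ survive among the algebras $L_m(\Gamma)$ as quotients of a restricted algebra --- is exactly what the paper extracts from the last paragraph of the proof of \cite[Lemma 3.4]{am2}. Your plan, by contrast, asserts the end result of this analysis as a known theorem and postpones precisely the steps that constitute the proof.

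Two smaller inaccuracies would also need repair if you insisted on your route. First, in the almost abelian case a core-free quasi-ideal, hence a core-free modular subalgebra, is one-dimensional, not of codimension one (Theorem \ref{t:qi}(iii) and Theorem \ref{t:mod}(ii)), so your count ``$s-i=2=\ell(L/M_i)$'' is only literally correct when $\dim(L/M_i)=2$; the correct conclusion in that case is just $mod\ell(L)=\ell(L)$. Second, before applying any restricted classification to the quotient $L/M_i$ you must note that quotients of restricted Lie algebras are restrictable (because $(\mathrm{ad}\,\bar{x})^p = \mathrm{ad}\,\overline{x^{[p]}}$ is inner), a point your outline does not address. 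Both issues evaporate on the paper's route, since Theorem \ref{t:qil} holds over an arbitrary field and restrictedness is invoked only once, in Varea's theorem applied to $L$ itself.
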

\begin{proof} Since $L$ is restricted over an algebraically closed field $F$ of characteristic $p > 0$, every modular subalgebra of $L$ is a quasi-ideal of $L$; this was proved for $p>7$ by Varea in \cite[Theorem 2.2]{mod} and extended to cover $p=2,3,5,7$ by Towers in \cite[Theorem 2.2]{sm}. Moreover, case (ii) of Theorem \ref{t:qil} cannot occur, and case (iii) only occurs when $L/B \cong sl_2(F)$ or the Witt algebra $W(1:\underline{1})$ (see the last paragraph of the proof of \cite[Lemma 3.4]{am2}).
\end{proof}

\end{document}